\newtheorem{theorem}{Theorem}
\newtheorem{lemma}[theorem]{Lemma}
\theoremstyle{definition}
\newtheorem*{remark*}{Remark}
\newcommand\EE{\mathbb{E}}
\newcommand\PP{\mathbb{P}}
\newcommand\g{\gamma}
\renewcommand\ge{\geqslant}
\renewcommand\geq{\geqslant}
\renewcommand\le{\leqslant}
\renewcommand\leq{\leqslant}
\newcommand\Bi{\mathrm{Bin}}
\newcommand\Var{\mathrm{Var}}
\newcommand\floor[1]{\lfloor #1\rfloor}
\newcommand\ceil[1]{\lceil #1\rceil}
\newcommand\eps{\varepsilon}
\begin{document}

\title{Dense Subgraphs in Random Graphs}
\author{Paul Balister \footnote{Department of Mathematical Sciences, University of Memphis, Memphis TN 38152, USA. Partially supported by NSF grant DMS 1600742}, B\'ela Bollob\'as 
\footnote{Department of Pure Mathematics and Mathematical Statistics, University of Cambridge,
Cambridge CB3 0WB, UK \emph{and} Department of Mathematical Sciences, 
University of Memphis, Memphis TN 38152, USA
\emph{and} 
London Institute for Mathematical Sciences, 35a South St., Mayfair, London W1K 2XF,
UK. Partially supported by NSF grant DMS 1600742 and MULTIPLEX grant no. 317532.
}
Julian Sahasrabudhe \footnote{Instituto Nacional de Matem\'{a}tica Pura e Aplicada (IMPA), 
Estr. Dona Castorina, 110 Jardim Botânico, Rio de Janeiro RJ 22460-320, Brasil \emph{and}
Peterhouse, University of Cambridge, Cambridge CB2 1RD, UK}, 
Alexander Veremyev\footnote{Industrial Engineering \& Management Systems, University of Central Florida, Orlando, Florida, USA.
Supported, in part, by the AFRL Mathematical Modeling and Optimization Institute}
}
\maketitle

\begin{abstract}
For a constant $\g\in[0,1]$ and a graph~$G$, let $\omega_\g(G)$ be the largest integer $k$
for which there exists a $k$-vertex subgraph of $G$ with at least $\g\binom{k}{2}$ edges.
We show that if $0<p<\g<1$ then $\omega_\g(G_{n,p})$ is concentrated on a set of two integers. 
More precisely, with $\alpha(\g,p)=\g\log\frac{\g}{p}+(1-\g)\log\frac{1-\g}{1-p}$, we show that 
$\omega_\g(G_{n,p})$ is one of the two integers closest to
$\frac{2}{\alpha(\g,p)}\big(\log n-\log\log n+\log\frac{e\alpha(\g,p)}{2}\big)+\frac{1}{2}$, with high probability. 
While this situation parallels that of cliques in random graphs, a new technique is required to handle the more complicated ways in which these ``quasi-cliques'' may overlap.
\end{abstract}

\section{Introduction}

Let $G=(V(G),E(G))$ be a simple, undirected graph where $V(G)$ denotes the set of \emph{vertices} of $G$ (sometimes called \emph{nodes}) and $E(G)$ denotes the set of \emph{edges}. A graph $G$ is said to be \emph{complete} if all possible edges are present: if $\{i,j\} \in E(G)$ for all $i, j\in V(G) ,\, i\ne j$. For a subset $S\subseteq V(G)$, we denote by $G[S]$ the subgraph of $G$ \emph{induced by} $S$: the graph with vertex set $S$ and edge set $\{\{i,j\} : i,j \in S \} \cap E(G)$. A \emph{clique} $C$ is a subset of $V(G)$ for which $G[C]$ is a complete graph~\cite{luce1949method}.


Cliques are a indispensable concept in the theory of graphs and have been extensively studied in various contexts, reaching back to the 1930s with the celebrated results of Ramsey \cite{Ramsey} and Tur\'{a}n \cite{turan}. In random graphs, cliques have also been a central topic of study 
with its roots in the pioneering work of Erd\H{o}s \cite{ErdosRamsey} on the probabilistic method. More recently, the concept of a clique has arisen naturally in sociometry~\cite{luce1949method} to model cohesive subgroups of tightly knit elements in a graph~\cite{Bomze}. For example, in social networks, where vertices correspond to ``actors'' and edges indicate relationships between actors~\cite{WassermanFaust94}, a clique represents a group of people any two of which have a certain kind of relationship (friendship, acquaintance, etc.) with each other~\cite{Mokken79}. Some of the earliest work on cliques, in the context of sociometry, is presented in ~\cite{luce1949method,Luce50,HararyRoss57}.




However, in real-world applications, the clique is not always the correct concept; often we do not care that \emph{all} edges are present in a particular subset, but only that the set is very ``well connected'', in some appropriate sense. Consequently, a number of relaxations of the notion of ``clique'' have appeared in the literature in recent years~\cite{Pattillo13b,komusiewicz2016multivariate}.


One of the most popular and widely used clique relaxation models is the $\g$-\emph{quasi-clique}, where $\gamma \in [0,1]$ is a parameter~\cite{AbeResSud02a}. In particular, for $\g \in [0,1]$, we say that a subset $S \subseteq V(G)$ of a graph $G$ is a $\g$-quasi-clique if the graph $G[S]$, induced by $S$, has at least $\g \binom{|S|}{2}$ edges.

This concept was first defined by Abello, Pardalos \& Resende~\cite{AbelloA99a} who were interested in quasi-cliques in graphs representing telecommunications data. Later, the idea of ``dense clusters'' (a more general concept which includes $\g$-quasi-cliques) were studied in the context of molecular interaction networks described by Hartwell, Leland, Hopfield, John, Leibler, Stanislas, Murray and Andrew \cite{hartwell1999molecular} and further analyzed by Spirin and Mirny \cite{spirin2003protein}. They reported that dense subgraphs in molecular interaction networks correspond to meaningful modules or building blocks of molecular networks such as protein complexes or dynamic functional units.
The problem of finding large dense subgraphs have also appeared in a number of other domains including biology~\cite{Bader03,Bu03,Hu05,bastkowski2015minimum}, social network analysis~\cite{Crenson78,lee2016query,WassermanFaust94}, finance~
\cite{Boginski05,Huang09,saban2010analysis,Sim06} and data mining \cite{nguyen2017micro,tsourakakis2013denser}. 

Given the the myriad of instances for which the notion is useful, one would like to efficiently compute solutions to basic questions about 
quasi-cliques in a given graph: for example, ``what is the largest $\g$-quasi clique in (a given graph) $G$''? However, it comes as no surprise that the  computational problem of finding the largest quasi-clique in a given graph (along with many other such questions) is a hard computational problem, in general~\cite{Pattillo12} -- similar to the sister problem of finding large cliques in graphs \cite{Karp,haastad1999clique}. Moreover, the literature on \emph{exact} computational methods for this class of problems is extremely sparse and mostly focuses on the development and application of heuristic methods. It is therefore natural to study quasi-cliques in ``random'' or ``typical'' graphs, which may suffice for most applications, while allowing us to avoid the many hard computational barriers blocking the general problem.


To this end, we study the order of the largest $\g$-quasi-clique in the binomial random graph, a project initiated in a paper of Veremyev and Boginski  \cite{veremyev2012dense}. For a graph $G$, we let $\omega_\g(G)$ be the size of the largest subset of vertices of $G$ that induces a $\g$-quasi clique. Of course, $\omega_1(G) $ is the classical ``clique number'' of $G$, often denoted by $\omega(G)$. 

We prove that $\omega_\g(G_{n,p})$ is concentrated on two explicitly determined points, with high probability as $n \rightarrow \infty$, provided $0<p< \g <1$ are fixed real numbers. See Section~\ref{chap:MainResult} for a more careful statement of this result. 

Although these bounds are asymptotic, computational experiments suggest that they are quite accurate even for relatively small ($n=50, 100$) graphs generated using the $G_{n,p}$ model.
For the results of these experiments, see Section~\ref{chap:CompExp}.

\section{Notation and statement of the main result} \label{chap:MainResult}

As usual write $[n]$ for the set
$\{1,\dots,n\}$ and $G_{n,p}$ for the binomial random graph on vertex set
$[n]$ with edge probability $p\in(0,1)$. We use the notation $O_n(1)$ to denote a quantity that is
bounded by a constant as $n$ tends to infinity and we use $o_n(1)$ to denote a quantity that tends
to zero as $n$ tends to infinity. We say that a sequence of events $E_n$ holds
\emph{with high probability} (henceforth whp) if $\PP(E_n)=1-o_n(1)$. For a graph $G$ we let $e(G)$ denote the
number of edges in the graph.

A complete subgraph on $k$ vertices will be called a $k$-\emph{clique}, and we define the
\emph{clique number} $\omega(G)$ of a graph $G$ to be the largest integer $k$ for which $G$ contains a
$k$-clique. The study of the clique number of $G_{n,p}$ was first carefully considered by
Matula~\cite{Matula1}, who noticed that the clique number of $G_{n,p}$ is concentrated on a small
set of values. These results were later strengthened by Grimmett and McDiarmid~\cite{GMcDCliques}
and then Bollob\'as and Erd\H{o}s~\cite{BECliques}, who showed that for fixed $0\le p\le 1$ the
clique number takes one of only \emph{two values}, whp (See also Theorem~11.1 in~\cite{BBRandomGs}). We prove that a similar phenomena persists for $\g$-quasi-cliques. However, a significant difficulty arises when controlling the concentration of the count of $\g$-quasi cliques directly. We tackle this issue by instead controlling a closely related random variable, which is more naturally handled.

We call $n$-vertex graph a $\g$-quasi-clique if
$e(G)\ge\g\binom{n}{2}$. For a graph $G$, we define $\omega_\g(G)$ to be the the largest integer
$k$ for which there exists a $\g$-quasi-clique subgraph of order~$k$. For $0<p<\g<1$, we show that $\omega_{\g}(G_{n,p})$ is concentrated on two points whp as $n \rightarrow \infty$.
\begin{theorem}\label{thm:MainTheorem}
 Let\/ $0<p<\g<1$ and\/ $\eps>0$ be fixed and define\/
 \[
  \alpha(\g,p):=\g\log\frac{\g}{p}+(1-\g)\log\frac{1-\g}{1-p}.
 \]
 Then
 \[
  \omega_\g(G_{n,p})-\frac{2}{\alpha(\g,p)}\Big(\log n-\log\log n+\log\frac{e\alpha(\g,p)}{2}\Big)
  \in (-\eps,1+\eps), 
 \] whp.
 In particular, $\omega_\g(G_{n,p})$ is one of the two integers closest to
 \[
  \frac{2}{\alpha(\g,p)}\Big(\log n-\log\log n+\log\frac{e\alpha(\g,p)}{2}\Big)+\frac{1}{2},
 \] whp.
\end{theorem}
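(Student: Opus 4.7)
The approach is to follow the Bollob\'as--Erd\H{o}s two-point concentration template for the clique number, with extra care at the second-moment step. Set $\alpha=\alpha(\g,p)$ and $k^*(n)=\tfrac{2}{\alpha}\bigl(\log n-\log\log n+\log\tfrac{e\alpha}{2}\bigr)$, and let $X_k$ count the $k$-subsets $S\subseteq[n]$ with $e(G_{n,p}[S])\ge\g\binom{k}{2}$. Combining Stirling for $\binom{n}{k}$ with the refined Cram\'er / local-CLT estimate
\[
 \PP\bigl(\Bi(N,p)\ge\g N\bigr)=\frac{c(\g,p)+o(1)}{\sqrt N}\,e^{-\alpha N}
\]
at $N=\binom{k}{2}$ gives $\EE[X_k]$ precisely, and the ratio $\EE[X_{k+1}]/\EE[X_k]$ crosses $1$ exactly at $k=k^*+\tfrac12+o(1)$, reproducing the threshold. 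For $k=\lceil k^*+1+\eps\rceil$ we get $\EE[X_k]=o(1)$, and Markov's inequality gives $\omega_\g(G_{n,p})<k$ whp, which is the upper half of the theorem.

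For the lower bound I want $X_k\ge1$ whp at $k=\lfloor k^*-\eps\rfloor$, for which by Chebyshev's inequality it suffices that $\EE[X_k^2]=(1+o(1))\EE[X_k]^2$. Splitting by $j=|S\cap T|$ and conditioning on $c=e(G[S\cap T])$ (the edges of $G[S\setminus T]$ and those from $S\setminus T$ to $S\cap T$ are independent of the analogous edges on the $T$ side and of $c$) gives
\[
 q(j)=\sum_{c}\PP\bigl(\Bi(\tbinom{j}{2},p)=c\bigr)\,\PP\bigl(\Bi(\tbinom{k}{2}-\tbinom{j}{2},p)\ge\g\tbinom{k}{2}-c\bigr)^{2},
\]
where $q(j)$ is the joint probability that two fixed $k$-sets with intersection of size $j$ are both $\g$-quasi-cliques. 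The $j\le1$ contributions to $\EE[X_k^2]$ already total $(1+o(1))\EE[X_k]^2$, so the task reduces to showing $\sum_{j\ge2}\binom{k}{j}\binom{n-k}{k-j}q(j)=o\bigl(\binom{n}{k}\,\PP(S\text{ is a q.c.})^{2}\bigr)$.

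The hardest regime is intermediate $j$, roughly $2\le j\le k-O(1)$. For $\g=1$ the inner sum collapses since the shared edges must all be present, giving the clean factor $p^{2\binom{k}{2}-\binom{j}{2}}$ that drives the classical proof; for $\g<1$ no such rigidity holds, since $c$ is itself a fluctuating binomial with mean $p\binom{j}{2}$, and bounding each conditional tail by the unconditional marginal loses a factor in $j$ that is not summable. My plan to recover the cancellation is to replace $X_k$ by a \emph{truncated} variable $X_k'$ counting $k$-sets with $e(G[S])\in\bigl[\g\binom{k}{2},\,\g\binom{k}{2}+Ck\bigr]$; by the local CLT this costs only a constant factor in the first moment but forces both conditional tails in the analogue of $q(j)$ to lie within $O(k)$ of the threshold, where a saddle-point / exponential-tilting computation should give
\[
 q'(j)\le e^{-\alpha\binom{j}{2}+O(j\log k)}\,\bigl(\EE[X_k']/\tbinom{n}{k}\bigr)^{2}.
\]
The $j\ge2$ contribution is then dominated by the geometric series $\sum_{j} e^{-\alpha\binom{j}{2}}$ and is negligible. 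Performing this saddle-point alignment -- matching the $c$-dependence of the squared tail with the large-deviation rate of the intersection, uniformly in $j$ -- is where I expect the bulk of the technical work to lie, and is presumably where the authors' ``closely related random variable'' enters.
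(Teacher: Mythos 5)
Your upper bound, the location of the threshold, and the reduction of the lower bound to $\EE[X_k^2]=(1+o(1))\EE[X_k]^2$ all match the paper, and you correctly identify the real obstacle: the overlap edge count $c=e(G[S\cap T])$ fluctuates, and the conditional boost to the second quasi-clique event grows with $c$. The gap is in your proposed fix. Truncating the \emph{total} edge count of each $k$-set to $[\g\binom{k}{2},\g\binom{k}{2}+Ck]$ does not constrain $c$: a $k$-set with exactly $\lceil\g\binom{k}{2}\rceil$ edges can still carry a large surplus of edges inside $S\cap T$, compensated outside it. Concretely, with $T=\binom{j}{2}$, $R=\binom{k}{2}-T$, let $R_j(L)$ denote the contribution of the event $c=L$ normalised by $\PP\bigl(e(S)=\lceil\g\binom{k}{2}\rceil\bigr)^2$. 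The step ratio $R_j(L+1)/R_j(L)$ equals roughly $\frac{1-p}{p}\cdot\frac{T-L}{L}\bigl(\frac{\g\binom{k}{2}-L}{R-\g\binom{k}{2}+L}\bigr)^2$, which at $L=\g T$ is about $\frac{\g(1-p)}{(1-\g)p}>1$ (here $p<\g$ is exactly what makes it exceed $1$). So $R_j(L)$ \emph{increases} geometrically past $L=\g T$, up to some $L^*=\g'T$ with $\g'>\g$, and the unrestricted sum over $c$ is $e^{c_0T}$ with $c_0>\alpha$ strictly rather than $e^{\alpha T+O(j\log k)}$. Your claimed bound on $q'(j)$ therefore fails; e.g.\ at $j=k-1$ one gets a contribution of order $kn\binom{n}{k}^{-1}e^{c_0 T}=e^{(c_0-\alpha)\binom{k}{2}-O(k\log k)}/\EE[X_k]\to\infty$, so the sum over $j\ge2$ does not close. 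The window width $Ck$ versus width $1$ is immaterial here.

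What is actually needed --- and what the paper's ``closely related random variable'' supplies --- is a \emph{hereditary} truncation: count only $k$-sets that have exactly $\lceil\g\binom{k}{2}\rceil$ edges \emph{and} satisfy $e(A)\le\g\binom{\ell}{2}+D_k(\ell)$ for every subset $A$ of every size $\ell$, where $D_k(\ell)=\min\bigl(\binom{\ell}{2},\binom{k}{2}-\binom{\ell}{2}\bigr)\ell^{-1/2}\log k$ (the ``$\g$-flat'' sets). A separate union-bound lemma shows that a uniform graph with $\lceil\g\binom{k}{2}\rceil$ edges is $\g$-flat whp, so the first moment changes only by a factor $1+o(1)$; in the second moment, flatness of $S$ caps $c\le\g\binom{j}{2}+D_k(j)$, so the geometric growth contributes only $\lambda^{D_k(j)}=e^{o(\min(T,R))}$ with $\lambda=2\frac{\g(1-p)}{(1-\g)p}$, which is absorbed by the $t(j)$ factor in both regimes $R<T$ and $R\ge T$. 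If you replace your truncation by this one, the rest of your outline goes through essentially as you describe.
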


As usual, the binary entropy function for $\g\in(0,1)$ is
\[
 h(\g):=\g\log\frac{1}{\g}+(1-\g)\log\frac{1}{1-\g}.
\]
We use the following consequence of Stirling's formula. If $\g\in(0,1)$ is fixed, we have
\begin{equation}\label{equ:BinomBounds}
 \binom{n}{\gamma n+O_n(1)}=e^{nh(\g)-\frac{1}{2}\log(n\g(1-\g))+O_n(1)}.
\end{equation}

We first set out to give an upper bound for $\omega_{\g}(G_{n,p})$ which holds with high probability. Let $X_k=X_{k,\g}(G_{n,p})$ be the random variable which counts the number of subgraphs of $G_{n,p}$ that are $\g$-quasi-cliques on $k$ vertices. We easily obtain an upper bound on $\omega_\g(G_{n,p})$ by bounding $\EE X_k$.
In preparation, we state a basic fact about binomial random variables.

\begin{lemma}\label{lem:BinomRVBound}
 Let\/ $0<p<\g<1$ be fixed and $N \rightarrow \infty$. We have
 \[
  \PP(\Bi(N,p)=\ceil{\g N})=e^{-N\alpha(\g,p)+O(\log N)},
 \]
 and
 \[
  \PP(\Bi(N,p)\ge\g N)=e^{-N\alpha(\g,p)+O(\log N)}.
 \]
\end{lemma}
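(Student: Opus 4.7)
The plan is a direct Stirling-based estimate together with a standard geometric-tail domination argument.

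First, I would handle the pointwise estimate. Setting $k=\ceil{\g N}$, we write
\[
 \PP(\Bi(N,p)=k)=\binom{N}{k}p^k(1-p)^{N-k},
\]
and apply the Stirling estimate \eqref{equ:BinomBounds} (with $N$ in place of $n$) to get $\binom{N}{k}=e^{Nh(\g)+O(\log N)}$, where the $O(\log N)$ absorbs the $-\tfrac{1}{2}\log(N\g(1-\g))$ term and the error from $k=\g N+O(1)$. Taking logarithms of the exponential factor yields $k\log p+(N-k)\log(1-p)=N\g\log p+N(1-\g)\log(1-p)+O(\log N)$. A one-line algebraic check shows that
\[
 h(\g)+\g\log p+(1-\g)\log(1-p)=-\alpha(\g,p),
\]
since $h(\g)=\g\log(1/\g)+(1-\g)\log(1/(1-\g))$. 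Combining the two estimates gives the first assertion.

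For the tail estimate, I would use the ratio of consecutive binomial probabilities: for $k\ge\g N$,
\[
 \frac{\PP(\Bi(N,p)=k+1)}{\PP(\Bi(N,p)=k)}=\frac{N-k}{k+1}\cdot\frac{p}{1-p}\le\frac{(1-\g)p}{\g(1-p)}=:\rho,
\]
and since $p<\g$, we have $\rho<1$, a constant independent of $N$. Hence
\[
 \PP(\Bi(N,p)\ge\g N)\le\PP(\Bi(N,p)=\ceil{\g N})\cdot\sum_{j\ge0}\rho^j=O(1)\cdot e^{-N\alpha(\g,p)+O(\log N)},
\]
which absorbs into the $O(\log N)$ error term. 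The matching lower bound is immediate from $\PP(\Bi(N,p)\ge\g N)\ge\PP(\Bi(N,p)=\ceil{\g N})$.

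There is no real obstacle here; the only thing to be careful about is that $k=\ceil{\g N}$ rather than exactly $\g N$, so one should verify that shifting the argument of the binomial coefficient by $O(1)$ costs at most an $O(\log N)$ factor (this is exactly the content of \eqref{equ:BinomBounds}), and that the algebraic identity relating $h(\g)$, the log-likelihood terms, and $\alpha(\g,p)$ holds on the nose. Both are routine.
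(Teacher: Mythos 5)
Your proof is correct and follows essentially the same route as the paper: the point probability is estimated via the Stirling bound \eqref{equ:BinomBounds} together with the identity $h(\g)+\g\log p+(1-\g)\log(1-p)=-\alpha(\g,p)$, and the tail is reduced to the point probability by monotonicity of $\PP(\Bi(N,p)=r)$ beyond $\g N>pN$. The only (immaterial) difference is that you sum a convergent geometric series to get an $O(1)$ factor on the tail, whereas the paper uses the cruder bound of $N+1$ times the largest term; both are absorbed into the $e^{O(\log N)}$ error.
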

\begin{proof}
We have
\begin{align*}
 \PP(\Bi(N,p)=\ceil{\g N})&=\binom{N}{\ceil{\g N}}p^{\ceil{\g N}}(1-p)^{\floor{(1-\g)N}}\\
 &=e^{Nh(\g)-\frac{1}{2}\log(N\g(1-\g))+N(\g\log p+(1-\g)\log(1-p))+O(\log\frac{p}{1-p})}\\
 &=e^{-N\alpha(\g,p)+O(\log N)}.
\end{align*}
The second result follows as, for $r\ge \g N>pN$, $\PP(\Bi(N,p)=r)$ is decreasing in $r$,
and hence
\[
 \PP(\Bi(N,p)=\ceil{\g N})\le \PP(\Bi(N,p)\ge\g N)\le (N+1)\PP(\Bi(N,p)=\ceil{\g N}).
\]
\end{proof}


We now may establish an upper bound on $\omega_\g(G_{n,p})$, that holds whp, thus proving one of the inequalities implicit in the statement of Theorem~\ref{thm:MainTheorem}. In the following sections, we go on to show that the distribution of quasi-cliques (actually a subclass of these quasi-cliques) is sufficiently concentrated to prove Theorem~\ref{thm:MainTheorem}.

\begin{lemma}\label{lem:FirstMoment}
Let\ $0<p<\g<1$ and\/ $\eps>0$ be fixed. Then as $n \rightarrow \infty$
 \[\omega_\g(G_{n,p})<\frac{2}{\alpha(\g,p)}(\log n-\log\log n+\log\frac{e\cdot\alpha(\g,p)}{2})+1+\eps,\]
 whp.
\end{lemma}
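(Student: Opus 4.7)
The plan is to apply the first moment method. Fix $\eps>0$, write $\alpha:=\alpha(\g,p)$, and set
\[
k_0 := \bigl\lceil \tfrac{2}{\alpha}\bigl(\log n - \log\log n + \log\tfrac{e\alpha}{2}\bigr) + 1 + \eps \bigr\rceil.
\]
Since $\omega_\g(G_{n,p}) \ge k_0$ forces $X_k \ge 1$ for some integer $k \ge k_0$, it suffices by Markov's inequality and a union bound to show that $\sum_{k \ge k_0}\EE X_k = o_n(1)$.

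For the expectation, a fixed $k$-subset of $[n]$ spans a $\g$-quasi-clique with probability $\PP(\Bi(\binom{k}{2}, p) \ge \g\binom{k}{2})$, which by Lemma~\ref{lem:BinomRVBound} is $e^{-\alpha\binom{k}{2} + O(\log k)}$. Combined with the Stirling bound $\binom{n}{k} \le (en/k)^k$, we obtain
\[
\log \EE X_k \le k\log\tfrac{n}{k} + k - \alpha\tbinom{k}{2} + O(\log k).
\]

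To locate the crossover, treat $k$ as real and seek the root of the displayed bound divided by $k$, namely $\log n - \log k + 1 - \alpha(k-1)/2 = 0$. Substituting the leading-order solution $k \sim 2\alpha^{-1}\log n$ into the $\log k$ term gives $\log k = \log\log n + \log(2/\alpha) + o_n(1)$, so iterating once yields the root
\[
k^{**} = \tfrac{2}{\alpha}\bigl(\log n - \log\log n + \log\tfrac{e\alpha}{2}\bigr) + 1 + o_n(1),
\]
which lies just below $k_0$. For $k \ge k_0 \ge k^{**} + \eps/2$, the bracket $\log n - \log k + 1 - \alpha(k-1)/2$ is at most $-\alpha\eps/4$, and hence $\log\EE X_k \le -\alpha\eps k/4 + O(\log k) = -\Omega(\eps \log n)$. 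In particular $\EE X_{k_0} \le n^{-\Omega(\eps)}$, and because $\log \EE X_k$ decreases quadratically in $k$ past $k^{**}$, the remaining tail $\sum_{k > k_0}\EE X_k$ is dominated by its first term.

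The only delicate step is the bookkeeping in the previous paragraph: the threshold $k^{**}$ sits at precisely the scale where the per-unit-$k$ decrease in $\log \EE X_k$ is $\Theta(\log n)$ while the error term from Lemma~\ref{lem:BinomRVBound} and Stirling is only $O(\log k) = O(\log\log n)$. It is exactly because the $\eps$-shift buys a factor $n^{\Omega(\eps)}$ that swallows the polylogarithmic error that the explicit $+\eps$ appears in the statement, and why one cannot push $\eps$ below $\Theta(1/\log n)$ by this argument alone.
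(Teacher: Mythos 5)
Your proposal is correct, and its core --- the first-moment bound $\EE X_k=\binom{n}{k}\PP\big(\Bi(\binom{k}{2},p)\ge\g\binom{k}{2}\big)=e^{k(\log n-\log(k/e)-\alpha(\g,p)(k-1)/2+o_k(1))}$ together with the location of the crossover $k^{**}$ --- is the same as the paper's. The one genuine difference is how the values $k>k_0$ are handled. You take a union bound and sum $\EE X_k$ over all $k\ge k_0$, which obliges you to verify that the tail is dominated by its first term; your justification (the per-step decrement of $\log\EE X_k$ is $\Theta(\log n)$ near $k_0$ and grows linearly beyond) is right, but it is the one place a referee would ask you to write out the estimate, since the $O(\log k)$ error from Lemma~\ref{lem:BinomRVBound} grows to $O(\log n)$ when $k$ is as large as $n$ (it is still swallowed, because $k$ times the bracket is at most $-ck\log n$ once $k\ge 2k_0$, but this needs saying). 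The paper avoids the sum entirely with a monotonicity observation: averaging $e(A)$ over the $k$-subsets $A$ of a $j$-set shows that any $\g$-quasi-clique on $j>k$ vertices contains a $\g$-quasi-clique on $k$ vertices, so $X_k=0$ forces $X_j=0$ for all $j>k$, and it suffices to show $\EE X_{\ceil{\kappa}}=o_n(1)$ for the single value $\ceil{\kappa}$. That device is slightly cleaner and is the same one used for cliques (the case $\g=1$); your union bound is more robust in settings where such downward monotonicity is unavailable.
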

\begin{proof}
With $X_k=X_{k,\g}(G_{n,p})$ and $S=\binom{k}{2}$ we have
\begin{align*}
 \EE X_k&=\binom{n}{k}\PP(\Bi(S,p)\ge\g S)\\
 &\le \frac{n^k}{k!}e^{-S\alpha(\g,p)+O(\log S)}\\
 &=e^{k(\log n-\frac{\alpha(\g,p)(k-1)}{2}-\log(k/e)+o_k(1))}
\end{align*}
Let $\kappa=\frac{2}{\alpha(\g,p)}(\log n-\log\log n+\log\frac{e\cdot\alpha(\g,p)}{2})+1+\eps$.
If $k=\ceil{\kappa}$ then
\[
 \log n-\frac{\alpha(\g,p)(k-1)}{2}-\log(k/e)+o_k(1)<-\frac{\eps\cdot\alpha(\g,p)}{2}+o_k(1)
\]
is negative for large enough $n$, and hence the expectation must tend to zero.
Thus we have $\PP(X_k>0)\le\EE X_k=o_n(1)$. The existence
of a $\g$-quasi-clique on $j>k$ vertices implies, by a simple averaging argument, that there exists
a $\g$-quasi-clique subgraph on $k$ vertices. Thus if $X_k=0$ then $X_j=0$ for all $j>k$. Hence
$\omega_\g(G_{n,p})<\kappa$ with high probability.
\end{proof}

\section{$\g$-flat subgraphs}

To show that $G_{n,p}$ contains a $\g$-quasi-clique of order roughly
$\frac{2}{\alpha(\g,p)}\log n$ whp, we count a slightly restricted class of subgraphs. The advantage of
working with this restricted class is that the second moment of their count is controlled more naturally. 
Roughly speaking, we say that a $\g$-quasi-clique $G$ is $\g$-flat if every induced subgraph of $G$ is close to being a $\g$-quasi clique.

To make this definition precise, we need a few definitions. First, for a graph $G$ and a subset $A$ of the vertex set of~$G$, let us define $e(A)$ to be the number of edges with both end-points in~$A$.

Now, for $\g\in(0,1)$ and $\ell\in[k]$, we define $S=\binom{k}{2}$, $T=\binom{\ell}{2}$, and set
\[
 D_k(\ell)=\min(T,S-T)\ell^{-1/2}\log k.
\]
Call an $k$-vertex graph $G$ $\g$-\emph{flat} if $e(G)=\ceil{\g\binom{k}{2}}$ and for all
$A\subseteq V(G)$ with $\ell=|A|\in[2,k-1]$, we have $e(A)\le\g\binom{\ell}{2}+D_k(\ell)$.
We note that $\min(T,S-T)$ is clearly an upper bound on $e(A)-\g\binom{\ell}{2}$ when
$e(G)=\ceil{\g\binom{k}{2}}$, so this is only a restriction on $e(A)$ when $|A|=\ell>(\log k)^2$.

We shall show that if a subset of $k$ vertices in $G_{n,p}$ has $\ceil{\g\binom{k}{2}}$ edges then
it is reasonably likely that it will also be $\g$-flat, and hence the two notions are ``typically''
interchangeable. For positive integers $n$, $m$, $0\le m\le\binom{n}{2}$, we define the
\emph{Erd\H{o}s-R\'enyi random graph} $G(n,m)$ as the uniform probability space that is supported
on all $n$ vertex graphs with exactly $m$ edges.

\begin{lemma}\label{lem:typicallyflat}
 Let\/ $G=G(k,\ceil{\g\binom{k}{2}})$ and let $\g$ be fixed and $k \rightarrow \infty$. Then $G$ is\/ $\g$-flat with high probability.
\end{lemma}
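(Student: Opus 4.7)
The plan is to fix an $\ell$-subset $A \subseteq V(G)$, bound the probability that $e(A)$ exceeds $\g\binom{\ell}{2} + D_k(\ell)$, and then take a union bound over all $\binom{k}{\ell}$ choices of $A$ and all $\ell \in [2, k-1]$. As the paper's own remark notes, when $\ell \le (\log k)^2$ the flatness condition is automatic: in this range $\ell^{-1/2}\log k \ge 1$, so $D_k(\ell) \ge \min(T, S-T)$, which is already an upper bound on $e(A) - \g T$. Hence I only need to handle $\ell \in ((\log k)^2, k-1]$.

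For a fixed such $A$, the random variable $e(A)$ in $G(k, m)$ with $m = \ceil{\g\binom{k}{2}}$ is hypergeometric with parameters $(S, T, m)$ and mean $\mu = mT/S = \g T + O(1)$. By Hoeffding's classical observation that the m.g.f.\ of a hypergeometric variable is dominated by that of a binomial with the same mean, the standard Chernoff upper-tail bound
\[
  \PP(e(A) \ge \mu + t) \le \exp(-t^2/(2\mu + 2t/3))
\]
applies, and a lower-tail analog holds for $m - e(A)$. When $\ell \le k/\sqrt{2}$ we have $\min(T, S-T) = T$, so $D_k(\ell) = T\ell^{-1/2}\log k$; using $T/\ell = \Theta(\ell)$, plugging this $t$ into the upper-tail bound gives an exponent of order $\ell(\log k)^2$. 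When $\ell > k/\sqrt{2}$, $\min(T, S-T) = S-T$, and I would rewrite $\{e(A) \ge \g T + t\}$ as $\{m - e(A) \le \g(S-T) - t + O(1)\}$; since $(S-T)/\ell = \Theta(k-\ell)$, the lower-tail Chernoff bound applied to $m - e(A)$ yields an exponent of order $(k-\ell)(\log k)^2$.

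Combining via the union bound, $\log\binom{k}{\ell} = O(\min(\ell, k-\ell)\log k)$, which is dominated by the Chernoff exponents above by a factor of $\log k$. Summing $\exp(-c \min(\ell, k-\ell)(\log k)^2)$ over the relevant range of $\ell$ gives a geometric-like tail, bounded overall by $O(\exp(-c(\log k)^2)) = o_k(1)$. The main obstacle is handling the two regimes carefully so that the Chernoff exponent beats the union-bound entropy uniformly in $\ell$; the key point is that the extra $\log k$ factor in the definition of $D_k(\ell)$ is precisely what lifts the exponent to $(\log k)^2$, and this margin over the $O(\log k)$ cost of the union bound is what makes the argument succeed.
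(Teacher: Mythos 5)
Your proof is correct, and its skeleton is the paper's: a union bound over $\ell$-subsets, the observation that only $\ell>(\log k)^2$ requires an argument, and a case split according to whether $T=\binom{\ell}{2}$ or $R=S-T$ is the smaller quantity, with the resulting exponent $\Theta\big(\min(R,T)\ell^{-1}(\log k)^2\big)$ beating the union-bound entropy $\min(\ell,k-\ell)\log k$ by a factor of $\log k$. The one substantive difference is how the hypergeometric upper tail is bounded. The paper works bare-handed: it writes the exact probability $C(L)=\binom{T}{L}\binom{R}{\ceil{\g S}-L}\binom{S}{\ceil{\g S}}^{-1}$, shows that the ratio $C(L+1)/C(L)$ is decreasing and at most $e^{-c(r-1)/\min(R,T)}$ at $L=\ceil{\g T}+r$, and telescopes to get $C(\ceil{\g T}+r)\le e^{-cr(r-1)/(2\min(R,T))}$, paying a harmless factor $\ell^2$ for the number of terms in the tail. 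You instead invoke Hoeffding's domination of the hypergeometric moment generating function by the binomial one and quote Bernstein--Chernoff bounds, with the correct extra twist of applying the \emph{lower} tail to $m-e(A)$ in the regime $R<T$ so that the variance proxy is $\g R$ rather than $\g T$ (a direct upper-tail bound on $e(A)$ would be far too weak there, e.g.\ for $\ell=k-1$). This reproduces the same exponents in both regimes. Your route is shorter and rests on a standard but nontrivial external result about sampling without replacement, while the paper's is self-contained and elementary; either way the summation over $\ell$ closes the argument identically, and your final bound $e^{-c(\log k)^2}$ is in fact slightly stronger than the paper's $k^{-1}$.
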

\begin{proof}
Let $G=G(k,\ceil{\g\binom{k}{2}})$ be realized on the vertex set\/~$[k]$ and
fix a subset\/ $A\subseteq [k]$ with\/ $\ell=|A|\in[2,k-1]$. We shall show that
\begin{equation}\label{eq:flatbound}
 \binom{k}{\ell}\PP\left(e(A)\ge\g\tbinom{\ell}{2}+D_k(\ell)\right)\le k^{-2}.
\end{equation}
Set $S=\binom{k}{2}$, $T=\binom{\ell}{2}$, $R=S-T$, and put $C(L)=\PP(e(A)=L)$. Note that
\[
 C(L)=\binom{T}{L}\binom{R}{\ceil{\g S}-L}\binom{S}{\ceil{\g S}}^{-1}
\]
and for $0\le L<T$, we have
\begin{equation}\label{equ:lineIncrease}
 Q(L):=\frac{C(L+1)}{C(L)}
 =\frac{T-L}{L+1}\left(\frac{\ceil{\g S}-L}{R-\ceil{\g S}+L+1}\right).
\end{equation}
From \eqref{equ:lineIncrease} we see that $Q(L)$ is strictly decreasing as $L$ increases.
Let $L=\ceil{\g T}+r\le T$. Then if $r \geq 0$, 
\begin{align*}
 Q(L)&\le Q(\g T+r)\le\left(\frac{(1-\g)T-r}{\g T+r+1}\right)\left(\frac{\g R+1-r}{(1-\g)R+r}\right)\\
 &=\left(\frac{1-\frac{r}{(1-\g)T}}{1+\frac{r+1}{\g T}}\right)\left(\frac{1-\frac{r-1}{\g R}}{1+\frac{r}{(1-\g)R}}\right)\\
 &\le \min\Big\{1-\frac{r}{(1-\g)T},1-\frac{r-1}{\g R}\Big\}\le e^{-\frac{c(r-1)}{\min(R,T)}},
\end{align*}
where $c=1/\max(\g,1-\g)>0$ is a constant. Hence 
\begin{equation}\label{equ:A(D(l))}
 C(L) = C(\ceil{ \g T } + r) \ \le C(\ceil{\g T}+1)\prod_{s=1}^r e^{-\frac{c(s-1)}{\min(R,T)}}\le e^{-\frac{cr(r-1)}{2\min(R,T)}},
\end{equation}
where we have used the (trivial) fact that $C(\ceil{\g T}+1)\le 1$.
Now $T\ell^{-1/2}\log k\ge \frac{1}{2}\log k$ and
$R\ell^{-1/2}\log k\ge (k-1)^{1/2}\log k$, so $D_\ell(k)\to\infty$ uniformly in $\ell$
as $k\to\infty$. Thus for large $k$ we have
$cr(r-1)/(2\min(R,T))\ge c'\min(R,T)\ell^{-1}(\log k)^2$
for some $c'>0$ when $r>D_k(\ell)-1$. Hence
\begin{align}
 \PP\big(e(A)\ge\g T+D_k(\ell)\big)&=\sum_{\g T+D_k(\ell)\le L\le T} C(L)\notag\\
 &\le \ell^2 e^{-c'\min(R,T)\ell^{-1}(\log k)^2}\label{equ:MainProbinDevLem}
\end{align}
for large enough $k$.

Consider the case when $R<T$. Then $R=(k-\ell)(k+\ell-1)/2>\ell(k-\ell)/2$
and so $c'\min(R,T)\ell^{-1}(\log k)^2>5(k-\ell)\log k\ge (k-\ell)\log k+4\log k$
for large enough $k$. Now $\binom{k}{\ell}=\binom{k}{k-\ell}\le k^{k-\ell}$, so
\[
 \binom{k}{\ell}\PP\left(e(A)\ge\g\tbinom{\ell}{2}+D_k(\ell)\right)\le
 \binom{k}{\ell}k^2 e^{-(k-\ell)\log k-4\log k}\le k^{-2} ,
\] as required. Now suppose $R\ge T$. Then
$c'\min(R,T)\ell^{-1}(\log k)^2>3\ell\log k\ge\ell\log k+4\log k$
when $k$ is large enough. Now $\binom{k}{\ell}\le k^\ell$, so
\[
 \binom{k}{\ell}\PP\left(e(A)\ge\g\tbinom{\ell}{2}+D_k(\ell)\right)\le
 \binom{k}{\ell}k^2 e^{-\ell\log k-4\log k}\le k^{-2},
\]
as required. Hence \eqref{eq:flatbound} holds for all $\ell\in[2,k-1]$.

Now, for $2\le\ell\le k-1$, let $Y_\ell$ be the random variable counting the number of subsets
$A$ of order $\ell$ which induce more than $\g\binom{\ell}{2}+D_k(\ell)$ edges.
By \eqref{eq:flatbound} we have
\[
 \PP(Y_\ell>0)\le\EE(Y_\ell)\leq \binom{k}{\ell}\PP\left(e(A)\ge\g\tbinom{\ell}{2}+D_k(\ell)\right)
 \le k^{-2},
\]
for large enough~$k$. So the probability that $Y_\ell>0$ for any of the $<k$ choices for $\ell$
is at most $k^{-1}=o_k(1)$.
\end{proof}

Let $Z_k = Z_{k,n}$ be the random variable counting the number of copies of $\g$-flat subgraphs of order $k$ in $G_{n,p}$, with $p$ fixed and $n\rightarrow \infty$.
We now easily bound $\EE Z_k$, by using Lemma~\ref{lem:typicallyflat}, to relate it to the
quantity $\EE X_k$.

\begin{lemma}\label{lem:firstmomentZ}
 Let\/ $\eps>0$ and\/ $k\le \frac{2}{\alpha(\g,p)}(\log n-\log\log n+\log\frac{e\cdot\alpha(\g,p)}{2})+1-\eps$
 with\/ $k\to\infty$ as\/ $n\to\infty$. Then\/ $\EE Z_k\to\infty$.
\end{lemma}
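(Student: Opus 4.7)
The plan is to compute $\EE Z_k$ in closed form and then verify that the resulting expression diverges for the prescribed range of $k$.

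The first step is a clean decomposition of the probability that a fixed $k$-set induces a $\g$-flat subgraph. Conditional on $G_{n,p}[S]$ having exactly $m$ edges, the induced graph is distributed as $G(k,m)$. Since every $\g$-flat graph has exactly $\ceil{\g\binom{k}{2}}$ edges by definition, only the term $m=\ceil{\g S}$ (with $S=\binom{k}{2}$) contributes, giving
\[
 \PP(G_{n,p}[S]\text{ is }\g\text{-flat})
 = \PP\bigl(\Bi(S,p)=\ceil{\g S}\bigr)\cdot\PP\bigl(G(k,\ceil{\g S})\text{ is }\g\text{-flat}\bigr).
\]
By Lemma~\ref{lem:typicallyflat}, the second factor is $1-o_k(1)$, which I can replace by the crude lower bound $1/2$ once $k$ is large. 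By the first half of Lemma~\ref{lem:BinomRVBound}, the first factor equals $e^{-S\alpha(\g,p)+O(\log S)}$. Summing over the $\binom{n}{k}$ choices of $S$ and using $\binom{n}{k}\ge n^k/k!$,
\[
 \EE Z_k \ge \tfrac{1}{2}\tbinom{n}{k}e^{-S\alpha(\g,p)+O(\log k)}
 = \exp\!\Bigl(k\bigl[\log n-\tfrac{\alpha(\g,p)(k-1)}{2}-\log(k/e)\bigr]+O(\log k)\Bigr).
\]

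The second step is the asymptotic analysis of the bracketed quantity. Writing $\alpha=\alpha(\g,p)$ and $k-1\le\frac{2}{\alpha}(\log n-\log\log n+\log\frac{e\alpha}{2})-\eps$, I substitute to find
\[
 \tfrac{\alpha(k-1)}{2}\le\log n-\log\log n+\log\tfrac{e\alpha}{2}-\tfrac{\eps\alpha}{2},
\]
and, since $k=\Theta(\log n)$, I have $\log(k/e)=\log\log n+\log\tfrac{2}{\alpha}-1+o_n(1)$. Adding these cancellations, the bracket simplifies to $\tfrac{\eps\alpha}{2}+o_n(1)$, so the exponent is at least $k\cdot\tfrac{\eps\alpha}{4}-O(\log k)$ once $n$ is large. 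Because $k\to\infty$, this diverges to $+\infty$, proving $\EE Z_k\to\infty$.

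There is essentially no obstacle here beyond bookkeeping: the key conceptual input, namely that conditioning on the exact edge count turns $G_{n,p}[S]$ into a uniform $G(k,\ceil{\g S})$ and hence lets us import Lemma~\ref{lem:typicallyflat} as a multiplicative factor of order one, is what makes the calculation tractable. The only place where one has to be slightly careful is verifying that the leftover $O(\log k)$ errors (from Stirling, from Lemma~\ref{lem:BinomRVBound}, and from the asymptotic of $\log k$) are indeed absorbed by the positive linear-in-$k$ main term $k\cdot\tfrac{\eps\alpha}{4}$; this is immediate because $k/\log k\to\infty$.
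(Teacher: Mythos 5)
Your proof is correct and follows essentially the same route as the paper's: condition on the exact edge count to turn $G_{n,p}[S]$ into a uniform $G(k,\ceil{\g S})$, import Lemma~\ref{lem:typicallyflat} as a $1-o_k(1)$ multiplicative factor, apply Lemma~\ref{lem:BinomRVBound}, and check that the exponent diverges. The only blemish is the claim $\binom{n}{k}\ge n^k/k!$, whose inequality is backwards; the correct lower bound $\binom{n}{k}\ge(1-O(k^2/n))\frac{n^k}{k!}=(1-o_n(1))\frac{n^k}{k!}$ (valid since $k=O(\log n)$) serves the same purpose and changes nothing downstream.
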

\begin{proof}
We apply Lemma~\ref{lem:typicallyflat} to deduce that
\[
 \EE Z_k=\binom{n}{k}\PP(G(k,p)\text{ is $\g$-flat})\ge (1+o_k(1))\binom{n}{k}\PP(e(G(k,p))=\ceil{\g S}).
\]
Now $k=O(\log n)$ by assumption, so $\binom{n}{k}=\frac{n^k}{k!}(1-O(k^2/n))=(1+o_k(1))\frac{n^k}{k!}$. Hence
\begin{align*}
 \EE Z_k&=(1+o_k(1))\frac{n^k}{k!}\PP(\Bi(S,p)=\ceil{\g S})\\
 &=\frac{n^k}{k!}e^{-S\alpha(\g,p)+O(\log S)}\\
 &=e^{k(\log n-(k-1)\alpha(\g,p)/2-\log(k/e)+o_k(1))}.
\end{align*}
However, the exponent in the last line tends to infinity when $k\to\infty$ and
$k\le\frac{2}{\alpha(\g,p)}(\log n-\log\log n+\log\frac{e\alpha(\g,p)}{2})+1-\eps$.
\end{proof}

In the next section we turn to estimate the variance of $Z_k$.

\section{The second moment}

To prove our lower bound on $\omega_\g(G_{n,p})$, we count the number of $\g$-flat subsets of order~$k$ in $G_{n,p}$,
where $k$ is roughly $\frac{2}{\alpha(\g,p)}\log n$. For $k\in [n]$, recall that $Z_k$ is the random variable which
counts the number of $\g$-flat subsets of $G(n,p)$. To apply Chebyshev's inequality, we aim to
estimate the fraction
\begin{equation}\label{equ:FracF}
 F=\frac{\Var Z_k}{(\EE Z_k )^2}=\frac{\EE Z^2_k-(\EE Z_k)^2}{(\EE Z_k)^2}.
\end{equation}
In particular, we shall show $F=o(1)$, as both $k$ and $n$ tend to infinity. Let $A,B\subseteq [n]$ with $|A|=|B|=k$ and $|A\cap B|=\ell$.
We think of $\ell\in[2,k-1]$ and treat the degenerate cases $\ell \in\{0,1,k\}$ separately.
Put $S=\binom{k}{2}$, $T=\binom{\ell}{2}$, $R=S-T$ and let $g_\ell(L)$ denote the probability that
$e(A)=\ceil{\g S}$, $e(B)=\ceil{\g S}$ and $e(A\cap B)=L$.
We note that
\[
 g_\ell(L)=\binom{T}{L}\binom{R}{\ceil{\g S}-L}^2p^{2\ceil{\g S}-L}(1-p)^{2\floor{(1-\g)S}-T+L}
\]
and consider the ratio
\begin{align}
 R_\ell(L)&=\frac{g_\ell(L)}{\PP(e(A)=\ceil{\g S})^2}\notag\\
 &=\binom{T}{L}\binom{R}{\ceil{\g S}-L}^2\binom{S}{\ceil{\g S}}^{-2}p^{-L}(1-p)^{L-T}.\label{equ:R(L)}
\end{align}

The following lemma gives us a suitable way of estimating the quantity $R_\ell(L)$, for our purposes. For the remainder of the section, we maintain the assumption that $0 < p < \g \leq 1$ and that $k \rightarrow \infty$.

\begin{lemma}\label{lem:BoundOnRl}
 Let $2\le\ell\le k-1$, $r\geq 0$ be an integer and 
 set\/ $\lambda=2\cdot\frac{\g}{1-\g}\frac{1-p}{p}$.
 Then
 \begin{equation}\label{equ:BoundonRl}
  R_\ell(\floor{\g T}+r)\le\lambda^r e^{T\alpha(\g,p)+O_k(1)}
 \end{equation} and
 \begin{equation}\label{equ:BoundonRlneg}
  R_\ell(\floor{\g T}-r)\le R_\ell(\floor{\g T}).
 \end{equation}
\end{lemma}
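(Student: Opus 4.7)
I would analyze $R_\ell(L)$ as a sequence in $L$, locate its mode, estimate the value at the mode via Stirling's formula, and control values on either side by studying the discrete ratio $R_\ell(L+1)/R_\ell(L)$. A direct computation from \eqref{equ:R(L)} gives
\[
 \frac{R_\ell(L+1)}{R_\ell(L)}=\frac{T-L}{L+1}\cdot\left(\frac{\ceil{\g S}-L}{R-\ceil{\g S}+L+1}\right)^{\!2}\cdot\frac{1-p}{p},
\]
and each of the three factors is strictly decreasing in $L$, so the whole ratio is decreasing on $\{0,\dots,T-1\}$. This monotonicity is the main structural ingredient: it reduces both claims to estimates at the single value $L=\floor{\g T}$.

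Evaluating the ratio at $L=\floor{\g T}$ using $\ceil{\g S}-\floor{\g T}=\g R+O(1)$ and $R-\ceil{\g S}+\floor{\g T}+1=(1-\g)R+O(1)$, the three factors simplify to $\frac{1-\g}{\g}$, $(\frac{\g}{1-\g})^2$, and $\frac{1-p}{p}$ up to multiplicative factors of $1+O(1/T+1/R)$, whose product is $\frac{\g(1-p)}{(1-\g)p}=\lambda/2$. Since $R\ge k-1\to\infty$ uniformly in $\ell$, this ratio is at most $\lambda$ either by absorbing the small-$T$ corner into the $O_k(1)$ constant (where the range of $r$ is itself bounded) or by the strict inequality $\lambda/2<\lambda$ once $T$ is large. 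Combined with monotonicity, we obtain $R_\ell(L+1)/R_\ell(L)\le\lambda$ for every $L\ge\floor{\g T}$, and telescoping yields $R_\ell(\floor{\g T}+r)\le\lambda^r R_\ell(\floor{\g T})$.

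To finish \eqref{equ:BoundonRl} I estimate $R_\ell(\floor{\g T})$ directly via Stirling's formula \eqref{equ:BinomBounds} applied to each of $\binom{T}{\floor{\g T}}$, $\binom{R}{\g R+O(1)}$ (twice), and $\binom{S}{\ceil{\g S}}$ (twice). The $h(\g)$ contributions from these five binomial coefficients combine with coefficients $T+2R-2S=-T$, and the identity $\alpha(\g,p)=-h(\g)-\g\log p-(1-\g)\log(1-p)$ is arranged exactly so that the $e^{-Th(\g)}$ term exactly absorbs the factor $p^{-\g T}(1-p)^{-(1-\g)T}$, leaving $e^{T\alpha(\g,p)}$ times a polynomial prefactor of order $\frac{S}{R\sqrt{T}}$. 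A quick check in the regimes $\ell\le k/2$ and $\ell>k/2$ shows $\frac{S}{R\sqrt T}$ is bounded by an absolute constant over $2\le\ell\le k-1$, giving $R_\ell(\floor{\g T})=e^{T\alpha(\g,p)+O_k(1)}$.

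For \eqref{equ:BoundonRlneg} I use the hypothesis $p<\g$, which forces $\lambda=2\cdot\frac{\g(1-p)}{(1-\g)p}>2$. Thus the approximate value $\lambda/2$ of the ratio at $L=\floor{\g T}$ exceeds $1$, and by monotonicity the ratio is $>1$ for every smaller $L$ as well; hence $R_\ell$ is nondecreasing on $\{0,\dots,\floor{\g T}\}$, which gives $R_\ell(\floor{\g T}-r)\le R_\ell(\floor{\g T})$. The main technical obstacle throughout is securing uniform error estimates in $\ell$: when $T=O(1)$ the relative error $O(1/T)$ in the ratio does not vanish, but in exactly that regime the range of valid $r$ is also $O(1)$, so both sides of each inequality are absolutely bounded and the error can be absorbed into the $O_k(1)$ term.
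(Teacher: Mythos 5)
Your strategy is the same as the paper's: show the ratio $C(L)=R_\ell(L+1)/R_\ell(L)$ is decreasing in $L$, bound it by $\lambda$ at and above $\floor{\g T}$ and telescope, estimate $R_\ell(\floor{\g T})$ by Stirling to get $e^{T\alpha(\g,p)+O_k(1)}$, and show the ratio exceeds $1$ below $\floor{\g T}$ to get \eqref{equ:BoundonRlneg}. Indeed you handle one point more carefully than the paper does: for bounded $T$ the factor $\frac{T-\floor{\g T}}{\floor{\g T}+1}$ is not $\frac{1-\g}{\g}(1+o(1))$, so the asymptotic identification $C(\floor{\g T})\approx\lambda/2$ genuinely fails there (e.g.\ $\ell=2$, $\g$ near $1$ gives $C(0)\approx\frac{1-p}{p}(\frac{\g}{1-\g})^2\gg\lambda$), and your observation that in this regime $r$ is bounded and $R_\ell$ is uniformly $O(1)$, so everything can be absorbed into the $e^{O_k(1)}$, is exactly the right repair for \eqref{equ:BoundonRl}.

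However, that escape hatch is not available for \eqref{equ:BoundonRlneg}, which is an exact inequality with no $O_k(1)$ slack, and here your anchor point is one step too far to the right. You argue that $C(\floor{\g T})\approx\lambda/2>1$ and then invoke monotonicity for smaller $L$; but for bounded $T$ the value $C(\floor{\g T})$ can drop below $1$ even though $\lambda/2>1$. Concretely, take $\ell=3$ (so $T=3$), $\g=0.4$, $p=0.35$: then $\floor{\g T}=1$ and $C(1)=\frac{1-p}{p}\cdot\frac{2}{2}\cdot(\frac{\g}{1-\g})^2(1+o(1))\to\frac{0.65}{0.35}\cdot\frac{4}{9}\approx 0.83<1$, while $\lambda/2\approx 1.24$. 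The conclusion $R_\ell(0)\le R_\ell(1)$ is still true, because the quantity you actually need is $C(\floor{\g T}-1)$, and a direct computation (as in the paper) gives the exact bound $C(\floor{\g T}-1)\ge\frac{1-p}{p}\cdot\frac{\g}{1-\g}>1$ valid for every $T$: the lower bounds $\frac{T-\floor{\g T}+1}{\floor{\g T}}\ge\frac{1-\g}{\g}$ and $\frac{\ceil{\g S}-\floor{\g T}+1}{R-\ceil{\g S}+\floor{\g T}}\ge\frac{\g}{1-\g}$ hold with the favourable sign of the rounding errors, so no asymptotics are needed. By monotonicity $C(L)\ge C(\floor{\g T}-1)>1$ for all $L\le\floor{\g T}-1$, and \eqref{equ:BoundonRlneg} follows. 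So: shift your anchor from $\floor{\g T}$ to $\floor{\g T}-1$ and replace the approximate evaluation by this exact one, and your proof is complete.
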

\begin{proof}
We first bound $R_\ell(\floor{\g T})$. Note that $R\ge k-1$ and hence $R^2T=R^2(S-R)\ge S^2$
for $2\le \ell\le k-1$. Since $S=\binom{k}{2}\to\infty$ and $\g$ is fixed,
we may bound line~\eqref{equ:R(L)} by using equation~\eqref{equ:BinomBounds}, to obtain
\begin{align*}
 R_\ell(\floor{\g T})
 &=e^{Th(\g)+2(S-T)h(\g)-2Sh(\g)+\frac{1}{2}\log\frac{S^2}{\g(1-\g)R^2T}+O_k(1)}
  p^{-\floor{\g T}}(1-p)^{-\ceil{(1-\g)T}}\\
 &\le e^{-Th(\g)+O_k(1)}p^{-\floor{\g T}}(1-p)^{-\ceil{(1-\g)T}}\\
 &=e^{T\alpha(\g,p)+O_k(1)}.
\end{align*}
Now put $C(L)=R_\ell(L+1)/R_\ell(L)$ and observe that $C(L)$ can be written as
\[
 \frac{1-p}{p}\cdot\frac{T-L}{L+1}\left(\frac{\ceil{\g S}-L}{R-\ceil{\g S}+L+1}\right)^2.
\]
From this expression, we see that $C(L)$ strictly decreases as $L$ increases and therefore
\begin{align*}
 C(\floor{\g T}+r)&\le C(\floor{\g T})\\
 &\le\frac{1-p}{p}\cdot\frac{\g}{1-\g}\left(1+\frac{1}{\g R}\right).
\end{align*}
Now note that since $\ell<k$, by assumption, we have that $R\ge k-1$ and thus $R$ tends to
infinity with~$k$. Hence, for large $k$,
\[
 C(\floor{\g T}+r)\le 2\frac{1-p}{p}\frac{\g}{1-\g}=\lambda.
\]
We now apply this inequality $r$ times to obtain
\[
 R_\ell(\floor{\g T}+r)\le\lambda^r R_\ell(\floor{\g T}),
\]
which holds for $k$ sufficiently large, but independently of~$r$. This proves the
inequality~\eqref{equ:BoundonRl}.

To prove the inequality \eqref{equ:BoundonRlneg} we note that $C(L)$ is strictly decreasing and
\begin{align*}
 C(\floor{\g T}-1)&\ge\frac{1-p}{p}\frac{\g}{1-\g}\cdot\left(1+\frac{1}{(1-\g)T}\right)\left(1+\frac{1}{\g T}\right)\\
 &\ge\frac{1-p}{p}\frac{\g}{1-\g}>1.
\end{align*}
Thus $R_\ell(\floor{\g T}-r)\le R_\ell(\floor{\g T})$.
\end{proof}

We are now in a position to show that $F=o(1)$ as $n$ and $k$ tend to infinity.

\begin{lemma}\label{lem:VarianceCalc}
 Let\/ $k\le \frac{2}{\alpha(\g,p)}(\log n-\log\log n+\log\frac{e\alpha(\g,p)}{2})+1-\eps$.
 We have
 \[
  \EE Z_k^2=(1+o_k(1))(\EE Z_k)^2.
 \]
\end{lemma}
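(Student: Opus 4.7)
I decompose the second moment according to the intersection size $\ell = |A\cap B|$ of two $k$-subsets $A,B\subseteq [n]$. By symmetry, if $q_\ell$ denotes the probability that two fixed $k$-sets with intersection size $\ell$ are both $\g$-flat, then
\[
 \EE Z_k^2 = \sum_{\ell=0}^k \binom{n}{k}\binom{k}{\ell}\binom{n-k}{k-\ell}\,q_\ell,
\]
and the target is $(1+o_k(1))(\EE Z_k)^2 = (1+o_k(1))\binom{n}{k}^2 \PP(\text{flat})^2$. The degenerate ranges are quick: for $\ell \in \{0,1\}$ the edges governing flatness of $A$ and $B$ are disjoint, so $q_\ell = \PP(\text{flat})^2$; using $k=O(\log n)$ the ratio $\binom{n-k}{k}/\binom{n}{k} = 1 - O(k^2/n)$ gives a contribution $(1-o_k(1))(\EE Z_k)^2$ from $\ell=0$ and $O(k^2/n)(\EE Z_k)^2$ from $\ell=1$. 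The $\ell=k$ term equals $\EE Z_k$, which is $o((\EE Z_k)^2)$ since $\EE Z_k\to\infty$ by Lemma~\ref{lem:firstmomentZ}.

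The substantive work is for $\ell\in[2,k-1]$, and this is where the flatness restriction earns its keep. Since $A$ is $\g$-flat and $A\cap B$ is a subset of $A$ of size $\ell\in[2,k-1]$, we must have $L := e(A\cap B)\le \g T + D_k(\ell)$, so
\[
 q_\ell \le \sum_{L \le \g T + D_k(\ell)} g_\ell(L).
\]
Lemma~\ref{lem:typicallyflat} gives $\PP(\text{flat})=(1-o_k(1))\PP(e(G(k,p))=\ceil{\g S})$, and dividing by $\PP(\text{flat})^2$ yields $q_\ell/\PP(\text{flat})^2 \le (1+o_k(1))\sum_L R_\ell(L)$ by the definition of $R_\ell$. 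I then apply Lemma~\ref{lem:BoundOnRl}: inequality~\eqref{equ:BoundonRlneg} bounds the contribution from $L<\floor{\g T}$ by $(T+1)R_\ell(\floor{\g T}) \le (T+1) e^{T\alpha+O_k(1)}$, while \eqref{equ:BoundonRl} handles the terms $L=\floor{\g T}+r$, $r\in[0,D_k(\ell)]$, as a geometric series of ratio $\lambda>1$, summing to at most $O(\lambda^{D_k(\ell)})\,e^{T\alpha+O_k(1)}$. Altogether,
\[
 \sum_{L \le \g T + D_k(\ell)} R_\ell(L) \le \exp\!\bigl(T\alpha + D_k(\ell)\log\lambda + O(\log k)\bigr).
\]

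The main obstacle is combining this with the counting factor $\binom{k}{\ell}\binom{n-k}{k-\ell}/\binom{n}{k} \le (1+o_k(1))\binom{k}{\ell}(k/n)^\ell$ and verifying that the contribution is $o_k(1/k)$ uniformly in $\ell\in[2,k-1]$, so that the whole sum is $o_k(1)$. Taking logarithms, the per-$\ell$ exponent is
\[
 \log\tbinom{k}{\ell} - \ell\log(n/k) + T\alpha + D_k(\ell)\log\lambda + O(\log k).
\]
Substituting the hypothesis $k\alpha/2 \le \log n - \log\log n + O(1)$, the dominant positive term $T\alpha\approx \ell^2\alpha/2$ is canceled by the negative term $-\ell\log n$ coming from $(k/n)^\ell$, leaving a negative slack of order $\ell\log\log n$ for $\ell\ll k$ and a symmetric slack of order $(k-\ell)\log n$ for $\ell$ close to $k$. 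The penalty $D_k(\ell)\log\lambda$ for relaxing ``exactly $\g$-quasi-clique'' to ``$\g$-flat'' is by design sub-dominant: since $D_k(\ell)=\min(T,S-T)\ell^{-1/2}\log k$, it grows like $\ell^{3/2}\log k$ for $\ell\le k/2$ and like $(k-\ell)^{3/2}\log k$ for $\ell>k/2$, both of which are absorbed by the available slack once one splits the sum at $\ell=k/2$. The tightest point of the analysis is the mid-range $\ell\sim k/2$, where the calibration of $D_k$ must precisely match the savings granted by Lemma~\ref{lem:typicallyflat} on the first-moment side; this balance is exactly what the definition of $D_k(\ell)$ is built to achieve.
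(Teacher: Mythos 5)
Your decomposition is exactly the paper's: split by $\ell=|A\cap B|$, dispose of $\ell\in\{0,1,k\}$, use the flatness of $A$ to truncate the sum over $L=e(A\cap B)$ at $\g T+D_k(\ell)$, control the truncated sum of $R_\ell(L)$ via Lemma~\ref{lem:BoundOnRl} to get $e^{T\alpha(\g,p)+D_k(\ell)\log\lambda+O(\log k)}$, and then beat the counting factor $t(\ell)$ in two ranges of $\ell$. Up to that point the proposal is correct and matches the paper.

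The problem is in the final bookkeeping paragraph, which you rightly identify as the crux but then mis-estimate. You claim the slack after cancelling $T\alpha$ against $-\ell\log n$ is ``of order $\ell\log\log n$ for $\ell\ll k$.'' That is both incorrect and, taken literally, fatal to your own absorption step: the penalty you must absorb is $D_k(\ell)\log\lambda\asymp\ell^{3/2}\log k$, and $\ell^{3/2}\log k\gg\ell\log\log n$ as soon as $\ell\to\infty$ (since $\log k\asymp\log\log n$), so with only $\ell\log\log n$ of room the sum would \emph{not} close at, say, $\ell=k/4$. The correct accounting is that for $\ell$ bounded away from $k$ (the paper uses the range $R\ge T$, i.e.\ $\ell\lesssim k/\sqrt2$) one has $(\ell-1)\alpha/2\le(3/4+o(1))\log n$, hence slack $\ge\ell(1/4-o(1))\log n$, which comfortably absorbs $D_k(\ell)\log\lambda=O(\ell\, k^{1/2}\log k)=o(\ell\log n)$ because $k=O(\log n)$. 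The slack really is only $\Theta(\ell\log\log n)$ when $\ell$ is within $O(1)$ of $k$ --- the opposite end from where you place it --- and there the paper switches to the parametrization $\delta=k-\ell$, getting slack $\Theta(\delta\log n)$ from $R\alpha\ge\tfrac43\delta\log n$ versus $t(\ell)\le(kn)^\delta\binom{n}{k}^{-1}$ together with $\EE Z_k\to\infty$. Two smaller slips in the same paragraph: for $\ell>k/\sqrt2$ one has $D_k(\ell)=R\ell^{-1/2}\log k\asymp(k-\ell)k^{1/2}\log k$, not $(k-\ell)^{3/2}\log k$ (still $o((k-\ell)\log n)$, so harmless); and the ``tightest point'' is not the mid-range $\ell\sim k/2$, where the exponent is about $-\tfrac{k}{4}\log n$ and hugely negative, but the two extremes $\ell=O(1)$ and $k-\ell=O(1)$. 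None of this requires a new idea --- redoing the arithmetic as in the paper repairs it --- but as written the verification of the main step does not go through.
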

\begin{proof}
We consider the fraction $F$, from equation \eqref{equ:FracF}. We keep with the convention that
$S=\binom{k}{2}$, $T=\binom{\ell}{2}$, and $R=S-T$. Let $E_A$ and $E_B$ denote the events
that $A$, resp.\ $B$, induces a $\g$-flat subgraph. Let $E'_A$, resp. $E'_B$, denote the event
that $A$, resp.\ $B$, induce exactly $\ceil{\g S}$ edges. Note that $E_A\subseteq E'_A$ and
$E_B\subseteq E'_B$. Now write $t(\ell)=t_{n,k}(\ell)=\binom{k}{\ell}\binom{n-k}{k-\ell}\binom{n}{k}^{-1}$.
We now turn to bound $F$.
We may expand $Z_k$ as a sum of indicators
\[ Z_k =  \sum_{A \subset V(G), |A| = k} \mathbf{1}(E_A) \]
Hence $\EE Z_k = \binom{n}{k}\PP(E_A)$ thus
\[  F = \frac{\EE Z_k^2 - (\EE Z_k)^2}{\EE Z_k} = \sum_{A,B} \binom{n}{k}^{-2} \frac{\PP(E_A \cap E_B ) - \PP(E_A)\PP(E_B)}{\PP(E_A)^2}.  \]
We now divide the sum with respect to $|A\cap B| = \ell$ to obtain
\begin{align}
 F&=\sum_{\ell=0}^k\binom{n}{k}\binom{k}{\ell}\binom{n-k}{k-\ell}\binom{n}{k}^{-2}
 \frac{\PP(E_A\cap E_B)-\PP(E_A)\PP(E_B)}{\PP(E_A)\PP(E_B)}\notag\\
 &=\sum_{\ell=2}^{k-1}t(\ell)\cdot\frac{\PP(E_A\cap E_B)-\PP(E_A)^2}{\PP(E_A)^2}+o_n(1),\label{equ:ExpandF}
\end{align}
where we have eliminated the first two terms in the above sum as $E_A$ and $E_B$ are independent
events when $|A\cap B|\le 1$. We have also  eliminated the last term in the sum, i.e. when $E_A=E_B$. This is justified, as this term is at most
$(\binom{n}{k}\PP(E_A)))^{-1}=(\EE Z_k)^{-1}=o_k(1)$, by Lemma~\ref{lem:firstmomentZ}.
Let us denote the $\ell$th term in the sum at \eqref{equ:ExpandF} as $F(\ell)$.

Lemma~\ref{lem:typicallyflat} implies that
\[
 \frac{\PP(E_A\cap E_B)-\PP(E_A)^2}{P(E_A)^2}\le(1+o_k(1))\frac{\PP(E_A\cap E_B)}{\PP(E'_A)^2}.
\]
For $\ell\in [2, k-1]$, our ``flatness condition'' on subsets of $A$ applies and hence
\begin{align*}
 \PP(E_A\cap E_B)/\PP(E'_A)^2&=\PP(E'_A)^{-2}\sum_{0\le L\le\g T+D_k(\ell)}\PP(E_A\cap E_B\mid e(A\cap B)=L)\PP(e(A\cap B)=L)\\
 &\le\PP(E'_A)^{-2}\sum_{0\le L\le\g T+D_k(\ell)}\PP(E'_A\cap E'_B\mid e(A\cap B)=L)\PP(e(A\cap B)=L)\\
 &=\sum_{0\le L\le\g T+D_k(\ell)}R_\ell(L)\\
 &=\sum_{0\le L <\g T}R_\ell(L)+\sum_{\g T\le L\le\g T+D_k(\ell)} R_\ell(L)\\
 &\le T\lambda^{D_k(\ell)}e^{T\alpha(\g,p)+O_k(1)}.
\end{align*}
This last inequality follows from applying the inequality~\eqref{equ:BoundonRl}
(from Lemma~\ref{lem:BoundOnRl}) to each term in the right sum and applying the
inequality~\eqref{equ:BoundonRlneg} (again from Lemma~\ref{lem:BoundOnRl}) to the left sum.
So we may bound the $\ell$th term in the sum \eqref{equ:ExpandF}
as
\[
 F(\ell)\le t(\ell)T\lambda^{D_k(\ell)}e^{T\alpha(\g,p)+O_k(1)}.
\]
We first consider the case when $R<T$. Write $\delta:=k-\ell$.
Now
\[
 t(\ell)=\binom{k}{\delta}\binom{n-k}{\delta}\binom{n}{k}^{-1}\le (kn)^\delta\binom{n}{k}^{-1}
\]
and $\EE Z_k=\binom{n}{k}e^{-S\alpha(\g,p)+O(\log k)}\to\infty$.
Also $D_k(\ell)=R\ell^{-1/2}\log k=o_k(R)$ as $R<T$ implies $\ell\ge k/2$.
Thus
\[
 F(\ell)\EE Z_k\le e^{\delta\log(kn)-R\alpha(\g,p)+o_k(R)}
\]
But $R=\delta(k+\ell-1)/2\ge 2k\delta/3$ and $k\alpha(\g,p)\sim 2\log n$. Thus
$F(\ell)\le (\EE Z_k)^{-1}e^{-(\frac{1}{3}-o_k(1))\delta\log n}$.
In particular, $\sum_{\ell\colon R<T}F(\ell)=o(1)$.

Now consider the case when $R\ge T$. In this case we use the bound
$t(\ell)\le (1+o_k(1))(k^2/n)^\ell$ to deduce that
\[
 F(\ell)\le e^{T\alpha(\g,p)+\ell\log(k^2/n)+O_k(T \ell^{-1/2}\log k)+O_k(\log k)}
 =e^{\ell((\ell-1)\alpha(\g,p)/2-\log(n)+O_k(k^{1/2}\log k))}.
\]
Now $k=O(\log n)$ and $\ell<3k/4$. Thus $(\ell-1)\alpha(\g,p)/2\le (3/4+o_k(1))\log n$.
Hence $F(\ell)\le e^{-(1/4-o(1))\log n}$ and so $\sum_{\ell\colon R\ge T}F(\ell)=o(1)$.


\end{proof}

After these preparations, it is only a small step to finish the proof of Theorem~\ref{thm:MainTheorem}.

\begin{proof}[Proof of Theorem~\ref{thm:MainTheorem}.]
Let $\eps>0$ be given. The upper bound on $\omega_\g(G_{n,p})$ follows from Lemma~\ref{lem:FirstMoment}.
For the lower bound, assume $k\le\frac{2}{\alpha(\g,p)}(\log n-\log\log n+\log\frac{e\alpha(\g,p)}{2})+1-\eps$.
From Lemma~\ref{lem:firstmomentZ} we know that $\EE Z_k\to\infty$, so for sufficiently large $n$ we
have $\EE Z_k>0$ and thus we may apply Chebyshev's inequality to show that the quantity $\PP(X_k=0)$ is small. We have
\[
 \PP(X_k=0)\le\PP(Z_k=0)\le\PP(|Z_k-\EE Z_k|\ge\EE Z_k)\le\Var(Z_k)/\EE(Z_k)^2=F=o(1),
\]
where we have used the fact that every $\g$-flat set is a $\g$-quasi-clique for the first inequality.
The third inequality is Chebyshev's inequality and the bound on $F$ is the content
of Lemma~\ref{lem:VarianceCalc}.
\end{proof}

\section{Computational Experiments} \label{chap:CompExp}

Here, we note the bounds obtained from Theorem~\ref{thm:MainTheorem} are actually quite accurate in practice, even for relatively small values of $n$.
To illustrate, we performed a small set of computational experiments for graphs of size $n=50$ and $n=100$ and different values of $p$. For each pair $n,p$ we generated $100$ instances of graphs sampled according to the corresponding $G_{n,p}$ model. We have also selected various values of $\g$ ranging from $0.3$ to $0.9$. 

For each $\g$, $n$, $p$ in Table \ref{t_comp_exp} we report the minimum $\omega_{min}^\g$, maximum $\omega_{max}^\g$ and average $\omega_{avg}^\g$ cardinalities of the largest $\g$-quasi-cliques and compare this to $\omega_{th}^{\g}$, the ``theoretical'' value obtained from the formula in Theorem~\ref{thm:MainTheorem}. That is, 

\begin{equation} \label{equ:ThValue} \omega^{\g}_{th}(n) = \frac{2}{\alpha(\g,p)}\Big(\log n-\log\log n+\log\frac{e\alpha(\g,p)}{2}\Big) + \frac{1}{2} .\end{equation}

Observe that the obtained formula provides an accurate estimate of $\gamma$-quasi-clique
number $\omega_\gamma(G)$ in graph instances generated according to the binomial random graph $G_{n,p}$, even for relatively small values of $n$.

To identify the largest $\g$-quasi-clique in these experiments, we used the so-called feasibility check version of
formulation \textbf{F4} in \cite{veremyev2016exact} (or \textbf{AlgF4}). Previous experimental work has suggested this algorithm to be the best performing on instances generated from $G_{n,p}$. 

\begin{table}
\footnotesize
\setlength{\tabcolsep}{2mm}
\begin{tabular}{ccccc|ccccc|ccccc}
\hline
$\gamma$ & $\omega_{min}^\gamma$ & $\omega_{max}^\gamma$ &$\omega_{avg}^\gamma$ & $\omega_{th}^\gamma$ &
$\gamma$ & $\omega_{min}^\gamma$ & $\omega_{max}^\gamma$ &$\omega_{avg}^\gamma$ & $\omega_{th}^\gamma$ &
$\gamma$ & $\omega_{min}^\gamma$ & $\omega_{max}^\gamma$ &$\omega_{avg}^\gamma$ & $\omega_{th}^\gamma$ \\
\hline
\multicolumn{15}{c}{$n=50$} \\
\hline
\multicolumn{5}{c}{$p=0.20$} 	&	\multicolumn{5}{c}{$p=0.15$} &	\multicolumn{5}{c}{$p=0.10$} 	\\
\hline
0.9	&	4	&	5	&	4.95	&	5.72	&	0.9	&	3	&	5	&	4.12	&	5.06	&	0.9	&	3	&	5	&	3.27	&	4.39	\\
0.8	&	5	&	7	&	6.01	&	6.92	&	0.8	&	4	&	6	&	5.19	&	6.03	&	0.8	&	3	&	5	&	4.28	&	5.14	\\
0.7	&	6	&	8	&	7.2	&	8.44	&	0.7	&	5	&	8	&	6.02	&	7.26	&	0.7	&	3	&	6	&	5.05	&	6.09	\\
0.6	&	8	&	11	&	9.48	&	10.41	&	0.6	&	6	&	10	&	7.62	&	8.87	&	0.6	&	5	&	8	&	6.15	&	7.34	\\
0.5	&	10	&	15	&	12.58	&	12.64	&	0.5	&	8	&	12	&	9.85	&	10.99	&	0.5	&	6	&	10	&	7.8	&	9.05	\\
\hline
\hline
\multicolumn{15}{c}{$n=100$} \\
\hline
\multicolumn{5}{c}{$p=0.15$} 	&	\multicolumn{5}{c}{$p=0.10$} &	\multicolumn{5}{c}{$p=0.05$} 	\\
\hline
0.9	&	4	&	5	&	4.98	&	5.82	&	0.9	&	3	&	6	&	4.41	&	4.99	&	0.8	&	3	&	5	&	4.1	&	4.73	\\
0.85	&	4	&	6	&	5.6	&	6.4	&	0.8	&	5	&	7	&	5.23	&	5.92	&	0.6	&	5	&	7	&	5.72	&	6.65	\\
0.8	&	6	&	7	&	6.21	&	7.04	&	0.7	&	5	&	8	&	6.11	&	7.12	&	0.4	&	7	&	11	&	9.06	&	10.56	\\
0.75	&	6	&	8	&	6.95	&	7.78	&	0.6	&	7	&	10	&	7.74	&	8.75	&	0.3	&	11	&	16	&	12.77	&	14.44	\\
\hline
\end{tabular}
\caption{
Largest quasi-cliques in graphs generated according to $G_{n,p}$ model. For each $n,p$, the minimum $\omega_{min}^\gamma$, maximum $\omega_{max}^\gamma$ and average $\omega_{avg}^\gamma$ cardinalities of the largest quasi-cliques identified in 100 instances are reported. These values are compared against the values given by the formula for $\omega^{\g}_{th}$ at (\ref{equ:ThValue}).   
}
\label{t_comp_exp}
\end{table}

\bibliography{DenseSubgraphsBib}

\end{document}